\def\1{^{-1}}
\newtheorem{theorem}{Theorem}[section]
\newtheorem{proposition}[theorem]{Proposition}
\theoremstyle{definition}
\theoremstyle{remark}
\numberwithin{equation}{section}
\begin{document}
\title[On mod $p$ Honda formal group law]{Polynomial behavior of the Honda formal group law}

\begin{abstract}
This note provides the calculation of the formal group law $F(x,y)$ in modulo $p$ Morava $K$-theory at prime $p$ and $s>1$ as an element in $K(s)^*[x][[y]]$ and some applications to relevant examples.
\end{abstract}
\author{Malkhaz Bakuradze }
\address{Faculty of exact and natural sciences, A.Razmadze Math. Institute of Iv. Javakhishvili Tbilisi State University, Georgia }
\email{malkhaz.bakuradze@tsu.ge }
\thanks{The author was supported by Volkswagen Foundation, Ref.: I/84 328.}
\subjclass[2010]{55N22; 55N22}
\keywords{Formal group law, Morava $K$-theory}
\commby{Communicated by V. V. Vershinin}
\maketitle
\section{Introduction}

Let $K(s)^*(-)$, be the $s$-th Morava $K$-theory at prime $p$ \cite{JW}. The coefficient ring $K(s)^*(pt)$ is the Laurent polynomial ring in one variable $\mathbb{F}_p[v_s,v_s^{-1}]$, where $\mathbb{F}_p$ is the field of $p$ elements and $deg(v_s)=-2(p^s-1)$.

\medskip

Let $F(x,y)$ be the formal group law in $K(s)^*(-)$ theory \cite{H}. The purpose of this note is to prove that if $s>1$, the formal group law $F(x,y)$ is a polynomial modulo $y^{p^{i(s-1)}}$ (or equivalently modulo $x^{p^{i(s-1)}}$) for any $i\geq 1$, see Theorem \ref{Honda}. This fact was never mentioned before in the literature even though the proof is quite simple.  We also want to have a method for explicit calculation. The idea is to apply the Ravenel formula \cite{R} involving Witt's symmetric polynomials. The proof does not work for $s=1$.
The particular case \eqref{n=1} of Theorem \ref{Honda} was applied in several papers by the author, also \cite{SCH1, SCH2, SCH3}, and \cite{KLW}. Some other motivation is given in Section 3.
\bigskip

\section{The statement}

Recall the recursive formula from Ravenel's green book, (see \cite{R}, 4.3.8) for the formal group law. In $K(s)^*(-)$ theory it reads  (we set $v_s=1$ and  $q=p^{s-1}$ )

\begin{equation}
\label{eq:Ravenel}
F(x,y)=F(x+y,w_1(x,y)^{q},w_2(x,y)^{q^{2}},w_3(x,y)^{q^{3}},\cdots )
\end{equation}
where $F(x,y,z,\cdots)=x\oplus_{F}y\oplus_{F}z\oplus_{F}\cdots $ is the iterated $x\oplus_{F} y=F(x,y)$ and $w_j$ are $\mod(p)$ Witt's integral symmetric homogeneous polynomials of degree $p^j$ :

$$
x^{p^n}+y^{p^n}=\sum_jp^jw_j(x,y)^{p^{n-j}}.
$$

In particular

\begin{align*}
w_0=&x+y,\\
w_1=&-\sum_{0<j<p}p^{-1}\binom{p}{j}x^{j}y^{p-j}.\\
\end{align*}

We will need that $deg(w_j)=p^j$ and that $w_j(x,0)=w_j(0,y)=0$, for $j>0$.

\medskip

Clearly we have

\begin{equation}
\label{n=0}
F(x,y)=x+y \text{ modulo } y^{q}.
\end{equation}

One has for $s>1$ (see \cite{BP})

\begin{equation}
\label{n=1}
F(x,y)=x+y+w_1(x,y)^{q}\text{ modulo } y^{q^{2}}.
\end{equation}

We now want to prove that for $s>1$, $F(x,y)$ is again a polynomial modulo $y^{q^{n}}$ for any $n$.
The idea is to apply \eqref{eq:Ravenel}.

\bigskip

\begin{theorem}
\label{Honda}
 One has
$F(x,y)\in K(s)^*[x][[y]]$
for the formal group law $F(x,y)$ in mod $p$ Morava $K(s)^*(-)$ theory at $p$ and $s>1$.

A method for calculation of $F(x,y)$ modulo $y^{q^{n}}$ is given by the Ravenel formula \eqref{eq:Ravenel} and induction on $n$.
\end{theorem}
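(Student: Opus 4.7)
The plan is to proceed by induction on $n$, establishing for each $n\ge 1$ the existence of a polynomial $P_n(x,y)\in K(s)^*[x,y]$ with
\[
F(x,y)\equiv P_n(x,y)\pmod{y^{q^n}}.
\]
The base cases $n=1$ and $n=2$ are provided by \eqref{n=0} and \eqref{n=1}, so only the step $n\Rightarrow n+1$ will require work. The induction will be constructive and simultaneously produce the calculational recipe for $P_{n+1}$ promised in the second sentence of the theorem.

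For the inductive step the tool is the Ravenel recursion \eqref{eq:Ravenel}. Because $w_j(x,0)=0$, the polynomial $w_j$ is divisible by $y$, and hence $w_j(x,y)^{q^j}$ is divisible by $y^{q^j}$. Consequently every summand with index $j\ge n+1$ vanishes modulo $y^{q^{n+1}}$, and the infinite iterated $\oplus_F$ in \eqref{eq:Ravenel} will collapse to the finite expression
\[
F(x,y)\equiv (x+y)\oplus_F w_1(x,y)^q\oplus_F\cdots\oplus_F w_n(x,y)^{q^n}\pmod{y^{q^{n+1}}}.
\]

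It then remains to show that this finite iterated formal sum of polynomials is itself polynomial modulo $y^{q^{n+1}}$. I will set $g_0=x+y$ and $g_k=F\bigl(g_{k-1},\,w_k(x,y)^{q^k}\bigr)$ for $1\le k\le n$, and argue by sub-induction on $k$ that each $g_k$ is polynomial modulo $y^{q^{n+1}}$. The key input is the substitution principle: the outer hypothesis $F(x,y)-P_n(x,y)\in y^{q^n}\,K(s)^*[[x,y]]$ yields $F(u,v)\equiv P_n(u,v)\pmod{v^{q^n}}$ for any formal substitution. Taking $u=g_{k-1}$ and $v=w_k(x,y)^{q^k}$, the $y$-adic valuation of $v^{q^n}$ is at least $q^k\cdot q^n=q^{k+n}$, which is $\ge q^{n+1}$ as soon as $k\ge 1$. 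Hence
\[
g_k\equiv P_n\bigl(g_{k-1},\,w_k(x,y)^{q^k}\bigr)\pmod{y^{q^{n+1}}},
\]
and this right-hand side will be polynomial once $g_{k-1}$ is. Iterating closes the sub-induction, so $g_n\equiv F(x,y)\pmod{y^{q^{n+1}}}$ will be polynomial and the outer induction closes.

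The one place where $s>1$ enters is the inequality $q^{k+n}\ge q^{n+1}$ for $k\ge 1$; at $q=1$ the terms $w_k^{q^k}=w_k$ sit at $y$-valuation only $1$ and the Ravenel sum no longer truncates, consistent with the remark that the argument fails for $s=1$. The only subtle technical point I expect is the careful handling of the substitution $(x,y)\mapsto(g_{k-1},w_k^{q^k})$: \eqref{eq:Ravenel} must be treated as an identity of power series in $K(s)^*[[x,y]]$ before substituting elements of positive $y$-adic valuation. Once that is set up, the rest reduces to bookkeeping of $y$-adic valuations.
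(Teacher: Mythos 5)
Your proposal is correct and is essentially the paper's own argument: both truncate the Ravenel recursion \eqref{eq:Ravenel} modulo $y^{q^{n+1}}$ and then peel off the iterated formal sum one term $w_j(x,y)^{q^j}$ at a time, using the valuation estimate $q^j\cdot q^k\ge q^{n+1}$ to replace $F$ by an already-known polynomial approximation at each stage. The only cosmetic difference is that you associate the iterated sum from the inside out and always substitute into $P_n$, whereas the paper peels from the outermost term inward, using $P_1,P_2,\dots,P_n$ in turn.
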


\begin{proof}  By induction hypothesis, we have that $F(x,y)$ modulo $y^{q^k}$, $k\leq n$ is a polynomial, say $P_k(x,y)$. By \eqref{n=0} and  \eqref{n=1} we have
	
\begin{equation*}
P_1(x,y)=x+y, \,\,\,P_2(x,y)=x+y+w_1(x,y)^q.
\end{equation*}

Induction step: Let us work  modulo $y^{q^{n+1}}$. Then \eqref{eq:Ravenel} implies

\begin{equation*}
\label{1}
F(x,y)\equiv F(x+y, w_1(x,y)^{q},w_2(x,y)^{q^2},\cdots , w_n(x,y)^{q^n}).
\end{equation*}

By induction hypothesis we have

\begin{align*}
	\label{2}
	F(x,y)\equiv & z_1+w_n^{{q^n}}, \text{\,\,\,\,\,\,\,\,\,\,\,\, where \, }  z_1=F(x+y,w_1^{q},\cdots , w_{n-1}^{q^{n-1}});\\
	 z_1\equiv& P_{2}(z_2, w_{n-1}^{q^{n-1}}), \,\,\,\,\,\,\,\,\,\,\,\,\,\,\,\,\,\,\,\,\,\, z_2=F(x+y,w_1^{q},\cdots, w_{n-2}^{q^{n-2}});\\
	z_2\equiv& P_{3}(z_3, w_{n-2}^{q^{n-2}}),
	\,\,\,\,\,\,\,\,\,\,\,\,\,\,\,\,\,\,\,\,\,\,\, z_3=F(x+y,w_1^{q},\cdots, w_{n-3}^{q^{n-3}});\\
	&\cdots  & \\
	z_{n-2}\equiv & P_{n-1}(z_{n-1},w_2^{q^2}),\,\,\,\,\,\,\,\,\,\,\,  z_{n-1}=F(x+y,w_1^q);\\
	z_{n-1}\equiv & P_{n}(x+y,w_1^q).&\\
\end{align*}

 Accordingly $F(x,y)$ is again a polynomial modulo $y^{q^{n+1}}$ for any natural $n$.  Therefore one can collect the coefficients at $y^j$, $j<q^{n+1}$ for any $n$ and write
$$
F(x,y)=\sum A_l(x)y^l\in K(s)^*[x][[y]].
$$
\end{proof}

The proof above gives more, namely one can evaluate the degree of the polynomial $A_l(x)$ .

\bigskip

\begin{proposition}
	\label{prop}
In $F(x,y)=\sum \alpha_{ij}x^iy^j$ we have $\alpha_{ij}=0$ for $i>(pq)^n$ whenever $j< q^{n}.$
\end{proposition}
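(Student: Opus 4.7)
The plan is to prove the equivalent statement $\deg_x P_n \le (pq)^n$ for every $n \ge 1$, where $P_n(x,y) = F(x,y) \bmod y^{q^n}$; the proposition then follows because, for $l < q^n$, the coefficient $A_l(x)$ is the $y^l$-coefficient of $P_n$, so $\deg A_l \le \deg_x P_n \le (pq)^n$. I will proceed by induction on $n$, with the base cases $n=1, 2$ handled by the explicit formulas \eqref{n=0} and \eqref{n=1}, giving $\deg_x P_1 = 1$ and $\deg_x P_2 = (p-1)q$.

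For the inductive step, I would use the recursive decomposition already exhibited in the proof of Theorem \ref{Honda}:
\[
P_{n+1} \equiv z_1 + w_n^{q^n}, \quad z_k \equiv P_{k+1}(z_{k+1}, w_{n-k}^{q^{n-k}}), \quad z_{n-1} \equiv P_n(x+y, w_1^q),
\]
all modulo $y^{q^{n+1}}$. Since $\deg_x w_n^{q^n} = (p^n-1)q^n < (pq)^{n+1}$, the task reduces to bounding $\deg_x z_1$. I would propagate a bound on $\deg_x z_k$ backwards from $z_{n-1}$ to $z_1$, controlling the $x$-degree at each substitution via the inductive hypotheses $\deg_x P_{k+1} \le (pq)^{k+1}$, $\deg_y P_{k+1} < q^{k+1}$, together with $\deg_x w_j^{q^j} = (p^j - 1)q^j$.

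The main obstacle is that the crude multiplicative substitution bound $\deg_x z_k \le (pq)^{k+1} \deg_x z_{k+1} + (\cdots)$ compounds to an unacceptable estimate of order $(pq)^{\Theta(n^2)}$ after unwinding the $n-1$ layers, far exceeding the target $(pq)^{n+1}$. To circumvent this, I would strengthen the inductive invariant to the joint-degree statement that every monomial $x^I y^J$ of $P_k$ (and of the intermediate $z_j$) satisfies $I \le (pq)^{n(J)}$, where $n(J)$ is the smallest integer with $J < q^{n(J)}$. This invariant is preserved by the substitution into the first argument of $P_{k+1}(a,b)$ at $b = w_{n-k}^{q^{n-k}}$ because $x, y \mid w_{n-k}$ forces $(w_{n-k}^{q^{n-k}})^l$ to be divisible by $(xy)^{l q^{n-k}}$; the simultaneous growth in the $y$-exponent absorbs the $x$-contribution accumulated from substituting $z_{k+1}$ into the $a$-slot, preserving the balance $I \le (pq)^{n(J)}$. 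Granting the strengthened invariant, one obtains $\deg_x z_1 \le (pq)^{n+1}$, which closes the induction and yields the stated bound.
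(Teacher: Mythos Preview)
Your approach diverges from the paper's and the key step is not established.  You correctly identify that unwinding the telescoping chain $z_k \equiv P_{k+1}(z_{k+1}, w_{n-k}^{q^{n-k}})$ from the proof of Theorem~\ref{Honda} leads to a compounding estimate of order $(pq)^{\Theta(n^2)}$.  Your remedy---carrying the joint invariant ``every monomial $x^Iy^J$ has $I\le (pq)^{n(J)}$'' through each substitution---is, however, only asserted.  The invariant is not stable under the products you need: a monomial of $z_{k+1}^{\,I}$ is $x^{\sum I'_\ell}y^{\sum J'_\ell}$ with each $I'_\ell\le (pq)^{n(J'_\ell)}$, and there is no general inequality relating $\sum_\ell (pq)^{n(J'_\ell)}$ to $(pq)^{n(\sum_\ell J'_\ell)}$ (indeed the left side can be arbitrarily larger).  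The phrase ``the simultaneous growth in the $y$-exponent absorbs the $x$-contribution'' does not supply the missing arithmetic, so the induction does not close as written.

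The paper avoids the telescoping chain entirely.  Using associativity and commutativity of $F$ it regroups in one step,
\[
F(x,y)\equiv P_n(x+y,\,z)+w_n^{\,q^n}\pmod{y^{q^{n+1}}},\qquad z=F\bigl(w_1^{\,q},\dots,w_{n-1}^{\,q^{n-1}}\bigr),
\]
so that the induction hypothesis is invoked only once, for $P_n$, yielding $i\le (pq)^n$ in any term $(x+y)^iz^j$.  The degree of $z^j$ is then bounded directly: since $y\mid w_k$, working modulo $y^{q^{n+1}}$ forces the exponent $j_k$ of $w_k^{\,q^k}$ in any surviving monomial to satisfy $j_k<q^{\,n+1-k}$, whence the total degree of that monomial is at most $\sum_{k=1}^{n-1}(pq)^k j_k<q^{n+1}\sum_{k=1}^{n-1}p^k$.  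Adding $(pq)^n$ and the harmless $w_n^{\,q^n}$ term gives a total $<(pq)^{n+1}$.  This single regrouping is the idea your argument is missing; it replaces the $n$-fold composition (and its compounding) by one application of $P_n$ together with a clean $y$-divisibility bound on $z$.
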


\begin{proof}
Base case is obvious. Induction step:
By Theorem \ref{Honda} we have modulo $y^{q^{n+1}}$
	
\begin{equation*}
F(x,y)\equiv P_{n}(x+y,z)+w_n(x,y)^{q^{n}},\,\,\,\,\,\,
z=F(w_1(x,y)^q,\cdots,w_{n-1}(x,y)^{q^{n-1}}).
\end{equation*}

\medskip

The term $w_n^{q^n}$ is of degree $(pq)^n$ hence is irrelevant.

 Let $\beta_{ij}(x+y)^{i}z^{j}$ be any term of the polynomial $P_{n}(x+y,z)$. By induction hypothesis we have
 $$i\leq(pq)^n \text{ whenever } j< q^{n}.$$

Then $z^j$ is a polynomial in $w_1^q,\cdots, w_{n-1}^{q^{n-1}}$. Therefore it has the terms
$$
(w_1^q)^{j_1}\cdots (w_{n-1}^{q^{n-1}})^{j_{n-1}},\text{ with \,\,\,} j_1<q^n,\cdots j_{n-1}<q^2,
$$

as we work modulo $y^{q^{n+1}}$.

Therefore $z^j$, as a polynomial in $x$ and $y$, has the terms of total degree
$$pqj_1+(pq)^2j_2+\cdots +(pq)^{n-1}j_{n-1}<pq^{n+1}+p^2q^{n+1}+\cdots +p^{n-1}q^{n+1}.$$

Thus for any term of $P_n(x+y,z)$
$$\text{the total degree}
<(pq)^{n}+q^{n+1}\sum_{1\leq l \leq n-1}p^l
<q^{n+1}\sum_{1\leq l \leq n} p^l
<(pq)^{n+1}.$$

This completes the proof.

\end{proof}

\section{Some simple applications}

The particular case $n=2$ of Theorem \ref{Honda} was already applied in several papers.

\medskip

Consider an extension of $C_{p^k}$ by an elementary abelian $p$-group. That is $G$ fits into an extension
$$
1\to(C_p)^l\to G\to C_{p^k}\to 1.
$$

It is known \cite{Y3,K} that $G$ is good, i.e., $K(s)^*(BG)$ is generated by Chern classes. However the explicit account of the ring structure was never done for $k>1$. 

The examples for the case $k=1$ was considered in \cite{BJ-JKT}, \cite{B1}. Namely let $\xi$ be a complex $m$-plane bundle over the total space of a cyclic 
covering $\pi :X\to X/C_p $ of prime index $p$. Let $c$ be the Chern class of $X\times_{C_p} \mathbb{C}\to X/C_p$, the complex line bundle associated to covering $\pi$ . In \cite{BP} we showed that modulo image of the transfer homomorphism the $i$-th Chern class $c_i$ of the transferred bundle $\xi$ can be written as a polynomial $\mathcal{A}_i$
in Chern classes $c_p,c_{2p}, \cdots c_{mp}$ and $c^{p-1}$. Using the polynomials $\mathcal{A}_i$ in \cite{BJ-JKT}, \cite{B1},
we showed for various examples of finite groups that $K(s)^*(BG)$ is the quotient of a polynomial ring by an ideal for which we listed explicit generators.

We recall that  Morava $K$-theory for a cyclic group is the truncated polynomials \cite{RAV}. In particular
$$K(s)^*(BC_{p^k})=\mathbb{F}_p[v_s,v_s^{-1}][u]/u^{p^{ks}}.$$
Also
$$K(s)^*(BU(m))=\mathbb{F}_p[v_s,v_s^{-1}][[c_1,\dots ,c_m]]$$
and because of the K\"{u}nneth isomorphisms
$$K(s)^*(BU(m) \times BC_{p^k}) = K(s)^*(BU(m)) \otimes K(s)^*(BC_{p^k}).$$

Theorem \ref{Honda} enables to write explicitly the relations derived by formal group law and splitting principle as relations in Chern classes of complex representations.

In particular, let $\theta$ be the line complex bundle over $BG$, associated to covering $\pi:BH\to BG$,\,\, $H=(C_p)^l$,
$\eta$
is the pullback by projection on the first factor $H\to C_p$ of the canonical bundle over $BC_p$ and
$\pi_!\eta$ is the transferred $\eta$. Then we have the bundle relation over $BG$
\begin{equation}
\label{fransferrelation}
\pi_!\eta\otimes \theta =\pi_!\eta.
\end{equation}

The relation \eqref{fransferrelation} holds because of Frobenius reciprocity of the transfer homomorphism of covering $\pi$ in complex $K$-theory:
$$
\pi_!\eta\otimes \theta=\pi_!(\eta \otimes \pi^*(\theta))=\pi_!(\eta \otimes 1)=\pi_!\eta.
$$

This implies the relations
$$
c_i(\pi_!\eta\otimes \theta)=c_i(\pi_!\eta)
$$
in $K(s)^*(BG)$. If we want to write everything in the explicit form, we have to apply the splitting principle to \eqref{fransferrelation} and write formally $\pi_!\eta$ as the sum of line bundles $\pi_!\eta=\eta_1+\cdots+\eta_{p^k}$. Thus we have
\begin{equation*}
\label{splitting}
\eta_1+\cdots+\eta_{p^k}=\eta_1\otimes \theta+\cdots+\eta_{p^k}\otimes \theta.
\end{equation*}

Using the elementary symmetric polynomials $\sigma_i$, $i=1,\cdots,p^k$ we can write
\begin{equation*}
\label{sigma}
c_i(\pi_!\eta)=\sigma_i(c_1(\eta_1),\cdots ,c_1(\eta_{p^k})).
\end{equation*}

In fact we have the following equations
\begin{equation}
\label{formal}
\sigma_i(c_1(\eta_1),\cdots ,c_1(\eta_{p^k}))=\sigma_i(F(c_1(\eta_1),c_1(\theta)),\cdots,F(c_1(\eta_{p^k}),c_1(\theta))).
\end{equation}

To rewrite \eqref{formal} explicitly, we apply Theorem \ref{Honda} for each term $F(c_1(\eta_j),c_1(\theta))$ and write it as a polynomial in $c_1(\eta_j)$ and  $u=c_1(\theta)$ as $\theta ^{p^k}=1$ implies $u^{p^{ks}}=0$. This is because $c_1(\theta^{p^k})=[p^k](c_1(\theta))$ and
$[p](x)=x^{p^{s}}$ for the Honda formal group law. 

Finally we turn to the Chern classes. In this way one can try to compute $K(s)^*(BG)$ as a quotient of a polynomial ring (as $G$ is finite) by relations ideal. For this we have to establish two facts: the classes we define generate, and the list of relations is complete. To check the latter is easier if the relations are given as explicit polynomials.

\bigskip

\section*{Acknowledgments} The author would like to thank the editors and the referee for several helpful comments. 

\bigskip

\bibliographystyle{amsplain}

\end{document}